\theoremstyle{plain}
\date{\today}
\title{A hypercyclic rank one perturbation of a unitary operator}
\author{Sophie Grivaux}
\address{
Laboratoire Paul Painlev\' e, UMR 8524, Universit\'e  Lille 1, Cit\' e Scientifique, 59655 Villeneuve d'Ascq
Cedex, France}
\email{grivaux@math.univ-lille1.fr}
\subjclass{47A16}
\keywords{Linear dynamical systems, hypercyclic operators, finite rank perturbations of unitary operators}
\thanks{This work was partially supported by ANR-Projet Blanc DYNOP}
\def\T{\ensuremath{\mathbb T}}
\def\R{\ensuremath{\mathbb R}}
\def\C{\ensuremath{\mathbb C}}
\def\N{\ensuremath{\mathbb N}}
\newcommand{\sep}{separable}
\newcommand{\hy}{hypercyclic}
\newcommand{\fhy}{frequently hypercyclic}
\newcommand{\ops}{operators}
\newcommand{\op}{operator}
\newcommand{\eve}{eigenvector}
\newcommand{\eva}{eigenvalue}
\newcommand{\ps}{perfectly spanning set of eigenvectors associated to unimodular
eigenvalues}
\newcommand{\mea}{measure}
\newcommand{\ifff}{if and only if}
\newcommand{\pss}[2]{\ensuremath{{\langle #1,#2\rangle}}}
\newtheorem{theorem}{Theorem}[section]
\newtheorem{lemma}[theorem]{Lemma}
\theoremstyle{definition}}
\theoremstyle{definition}}
\theoremstyle{definition}\newtheorem{definition}[theorem]{Definition}}
\theoremstyle{definition}}
\theoremstyle{definition}}
\newtheorem{question}[theorem]{Question}
\theoremstyle{definition}\newtheorem*{FFC Criterion}{Frequent
Faber-hypercyclicity Criterion}}
\newtheorem*{Hypercyclicity Criterion}{Hypercyclicity Criterion}
{\theoremstyle{definition}\newtheorem*{GS Criterion}{Godefroy-Shapiro
Criterion}}
\def\piednote#1{\let\oldfn=\thefootnote\def\thefootnote{}\footnote{\noindent#1}%
\addtocounter{footnote}{-1}\def\thefootnote{\oldfn}}
\begin{document}

\begin{abstract}
We prove that there exists a rank $1$ perturbation of a unitary operator on a complex separable infinite dimensional Hilbert space which is hypercyclic.
\end{abstract}

\maketitle

\section{Introduction}
We are interested in this note in the construction of some special \hy\ \ops\ on Hilbert spaces. Our work fits into the framework of linear dynamics, which is the study of the properties of the iterates $T^{n}$, $n\geq 0$, of a bounded linear \op\ $T$ acting on an infinite dimensional \sep\ Banach space $X$. It is of particular interest to study the behavior of the orbits $\mathcal{O}rb(x,T)=\{T^{n}x \textrm{ ; }n\geq 0\}$ of vectors $x$ of $X$ under the action of $T$. For instance when $\mathcal{O}rb(x,T)$ is dense in $X$, the vector $x$ is said to be \emph{\hy}. The \op\ $T$ itself is \hy\ when there exists an $x\in X$ such that $x$ is \hy\ for $T$. It is not completely trivial to exhibit \hy\ \ops: the first example of such an \op\ was given by Rolewicz \cite{R}, who proved that if $B$ denotes the standard backward weighted shift on $\ell_{2}(\N)$, $\lambda B$ is \hy\ for any complex number $\lambda $ with $|\lambda |>1$. Many more examples of \hy\ \ops\ on ``classical'' spaces can be found in the book \cite{BM}. It is a non-trivial result of Ansari \cite{A} and Bernal-Gonzalez \cite{Be}, relying on previous work of Salas \cite{S1} that any (real or complex) \sep\ infinite-dimensional Banach space $X$ supports a \hy\ \op. Such a general \op\ has necessarily the form $T=I+N$, where $N$ is a nuclear \op\ on $X$, so that a nuclear perturbation of the identity \op\ can indeed be \hy. Obviously a finite rank perturbation of the identity \op\ can never be \hy.
\par\smallskip
In \cite{Sh} Shkarin investigated the following question: can a finite rank perturbation of a unitary \op\ on a complex \sep\ infinite-dimensional Hilbert space be \hy? This question came from the work of Salas \cite{S2} on supercyclicity of weighted shifts: $T$ is said to be \emph{supercyclic} (a weaker requirement than hypercyclicity) if there exists a vector
$x\in X$ such that $\{\lambda T^{n}x \textrm{ ; }n\geq 0, \, \lambda \in \C\}$ is dense in $X$. It is known (\cite{Bo}, see also \cite{HW} and \cite{K}) that no hyponormal \op\ on a Hilbert space can be supercyclic. Salas thus proposed the following question: can a finite rank perturbation of a hyponormal \op\ on a Hilbert space be supercyclic? Shkarin answered in \cite{Sh} this question in the affirmative, and proved: there exists a unitary \op\ $V$ and an \op\ $R$ of rank at most $2$ acting on a Hilbert space $H$ such that $V+R$ is \hy\ on $H$. This yields an example of a contraction $A$ and a rank $1$ \op\ $S$ on $H$ such that $A+S$ is \hy. But  a natural question remained open in \cite{Sh}:

\begin{question}\label{q1}
 Does there exist a rank $1$ perturbation of a unitary \op\ on a Hilbert space which is \hy?
\end{question}

Our aim in this paper is to answer Question \ref{q1} in the affirmative:

\begin{theorem}\label{th1}
 There exists a unitary \op\ $U$ and a rank $1$ \op\ $R$ on the complex Hilbert space  $\ell_{2}(\N)$ such that the \op\ $T=U+R$ is \hy\ on $\ell_{2}(\N)$.
\end{theorem}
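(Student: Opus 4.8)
The plan is to build $T$ so that it possesses a \ps, and then to invoke the fact (Bayart--Grivaux) that such an operator admits a nondegenerate Gaussian measure on $\ell_2(\N)$ with full support with respect to which it is \erg, and hence is \fhy\ and \emph{a fortiori} \hy. The whole problem is thereby reduced to producing a unitary $U$ and a rank $1$ operator $R$ on $\ell_2(\N)$ such that $T=U+R$ has ``many'' unimodular eigenvectors, distributed along a continuous probability measure on $\T$.

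Fix the model. Since we want the eigenvectors of $T$ to span a dense subspace, $U$ will be cyclic, so we may work in its spectral representation: $U=M_z$, multiplication by the variable, on $L^2(\T,\mu)$ for a probability measure $\mu$ on $\T$, with cyclic vector $\mathbf 1$, and $Rf=\pss{f}{\varphi}\,\mathbf 1$ for a function $\varphi\in L^2(\mu)$ to be chosen; concretely one takes $\mu=\sum_n\beta_n^2\,\delta_{\lambda_n}$ with $(\lambda_n)_n$ a fixed dense sequence in $\T$ (so $Ue_n=\lambda_ne_n$ is diagonal, forcing $\beta_n\to0$ fast) and $R=\pss{\cdot}{\varphi}\,\psi$ with $\psi=\sum_n\beta_ne_n$. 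A short computation shows that $Tf=\lambda f$ with $f\neq0$ forces $f=f_\lambda:=-c\,(z-\lambda)^{-1}$ (that is, $f_\lambda=\sum_n\frac{\beta_n}{\lambda-\lambda_n}\,e_n$ in the discrete picture), which lies in the space exactly when $\int_\T\frac{d\mu(z)}{|z-\lambda|^2}<\infty$, subject to the single scalar equation
\[
1+\int_\T\frac{\overline{\varphi(z)}}{z-\lambda}\,d\mu(z)=0 .
\]
The task thus becomes: choose $\mu$ (with $(\lambda_n)$ dense) and $\varphi$ so that the solution set $E\subset\T$ of this equation, intersected with the integrability condition, is \emph{uncountable} and carries a continuous probability measure $\sigma$, and so that the eigenvector field $\lambda\mapsto f_\lambda$, $\lambda\in E$, is perfectly spanning. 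One should note at once that, since $g(\lambda):=\int\frac{\overline\varphi}{z-\lambda}\,d\mu$ is the boundary value of a non‑constant function of Nevanlinna class in the disc (it is, up to sign, the Cauchy transform of the finite measure $\overline\varphi\,d\mu$, which is purely atomic, hence singular), the set $E$ is automatically of Lebesgue measure zero; so $\sigma$ will be a continuous \emph{singular} measure and $E$ a Cantor‑type set.

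The construction is carried out by induction, adjoining finitely many new data at each stage: at stage $k$ one adds finitely many new points $\lambda_n$ (refining a nested family of finite unions of arcs), their weights $\beta_n$, and the corresponding values of $\varphi$, maintaining three requirements simultaneously. (a) Integrability: $\sum_n|\beta_n|^2/|\lambda-\lambda_n|^2<\infty$ for all $\lambda$ in the portion of $E$ already frozen — this is why the $\lambda_n$ must crowd onto $\T$ only slowly compared with the decay of the $\beta_n$. (b) The eigenvalue equation $g(\lambda)=-1$ on the corresponding piece of $E$: writing $g$ by means of a Herglotz (Cayley‑transform) kernel, so that for a suitable choice of $\varphi$ (e.g.\ arranging that $\overline\varphi\,d\mu$ be a positive measure) the relevant quantity becomes a real‑valued function of the arc‑parameter with a dense set of simple poles, running over all of $\R$ between consecutive poles, one solves $g=-1$ on each of a Cantor‑type nested system of arcs by an intermediate‑value argument, producing at the limit the set $E$ and the measure $\sigma$. (c) A quantitative separation estimate forcing the newly created eigenvectors to approximate a prescribed countable dense family of target vectors and — crucially for perfect spanning — forcing that no nonzero $u\in\ell_2(\N)$ be orthogonal to $\{f_\lambda:\lambda\in E\setminus D\}$ for any countable $D$; here one uses that $\lambda\mapsto\pss{f_\lambda}{u}$ is itself (the restriction to $\T$ of) the Cauchy transform of the discrete measure $\sum_n\beta_n\overline{u_n}\,\delta_{\lambda_n}$, which, by the way $E$ and the $\beta_n$ have been arranged, cannot vanish $\sigma$‑almost everywhere on $E$ unless $u=0$. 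Since $\sum_n|\beta_n|^2=1$ and $\varphi\in L^2(\mu)$, passing to the limit of the data produces a bounded rank $1$ operator $R$ and a unitary $U$, and $T=U+R$ is the operator sought.

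The main obstacle is exactly the collision between (a)--(b) and (c) under the rigidity of a rank $1$ perturbation: there is a single scalar degree of freedom, all eigenvectors form the one family $(z-\lambda)^{-1}$ and all eigenvalues obey the one equation $g(\lambda)=-1$, so one must force this single equation to have an uncountable solution set supporting a continuous measure while at the same time keeping the associated vectors in $\ell_2(\N)$ and spread out enough that they remain total after deletion of an arbitrary countable set of eigenvalues. The extra dimension available in Shkarin's rank $2$ perturbation effectively decoupled ``creating enough unimodular eigenvalues'' from ``filling out the span''; recovering both from one rank one bump is the crux, and the delicate quantitative bookkeeping — how fast the $\lambda_n$ are allowed to crowd onto $\T$, how the arguments of the residues $\beta_n\overline{\varphi(\lambda_n)}$ are rotated along the nested arcs, and how the $\ell_2$ norms of the $f_\lambda$ are controlled on $E$ — is what the proof has to manage.
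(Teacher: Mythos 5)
Your setup coincides with the paper's: a diagonal (equivalently, multiplication) unitary with unimodular diagonal coefficients $\lambda_n$, a rank one perturbation $b\otimes a$, the explicit \eve\ $f_\lambda=\sum_n\frac{a_n}{\lambda-\lambda_n}e_n$ subject to the $\ell_2$ condition and the single scalar equation $\sum_n\frac{a_n\overline{b}_n}{\lambda-\lambda_n}=1$ (this is exactly Lemma \ref{lem1}), and the target of a \ps. But the route you then take --- directly manufacturing an \emph{uncountable} Cantor-type solution set $E$ carrying a continuous measure $\sigma$, and proving perfect spanning by a uniqueness statement for Cauchy transforms on $E$ --- leaves the two hardest points unproved. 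First, your intermediate-value argument produces, at each finite stage, only finitely many exact solutions of a truncated equation; you do not show that these survive the addition of later atoms, that the limit points still satisfy the full equation \emph{and} the integrability condition $\sum_n|a_n|^2/|\lambda-\lambda_n|^2<\infty$, nor that the limiting set is uncountable and supports a continuous measure (the poles become dense, so ``between consecutive poles'' is vacuous in the limit; the structure of the level set of the boundary Cauchy transform is precisely the delicate boundary-value issue for which Shkarin had to invoke Belov's theorem). Second, the claim that $\pss{f_\lambda}{u}$ cannot vanish $\sigma$-almost everywhere on $E$ unless $u=0$ is asserted, not argued; uniqueness of Cauchy transforms of discrete measures across a singular Cantor set is false in general and would itself require a hard quantitative construction.

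The paper avoids both obstacles by using the refinement of the \eve\ criterion from \cite{G}, stated as Theorem \ref{th0}: it suffices to produce a \emph{countable} family $(u_i)$ of \eve s with distinct unimodular \eva s whose span is dense and such that every $u_i$ is a limit of other $u_n$'s; the Cantor sets of \eva s and the continuous measure are then supplied by that theorem, not constructed by hand. This reduces everything to a finite-dimensional induction in which, at step $n$, the truncated operator $T_n=D_n+R_n$ on $\C^{n}$ has exactly the $n$ unimodular eigenvalues $\mu_1,\dots,\mu_n$ (a linear system $M_{n}\,(a_{j}\overline{b}_{j}^{(n)})_{j}=(1,\dots,1)^{t}$ solved explicitly), density of the span is checked against $\mathrm{sp}[e_1,\dots,e_n]$ with a quantitative constant $C_n$, and the approximation property is arranged by placing $\lambda_n$ and $\mu_n$ very close to an earlier $\mu_{j(n)}$. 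To salvage your approach you should either import Theorem \ref{th0} and drop the direct construction of $\sigma$ and the Cauchy-transform uniqueness claim, or supply genuine proofs of the two missing points above.
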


Our method of proof is rather different from the one employed in \cite{Sh}, the only common point being the criterion for hypercyclicity which we use: it is based on the properties of \eve s associated to \eva s of $T$ which are of modulus $1$, and was first introduced in \cite{BG}. We use here a recent refinement of this criterion which comes from \cite{G}, see Section 2 of this paper. The \ops\ which we construct are intrinsically different from the ones of \cite{Sh}: in \cite{Sh} the \ops\ live on the function space $L^{2}(\T)$, and the \op\ $V+R$ ($V$ unitary, $R$ of rank $2$) which is constructed is an \op\ induced by $V'+R'$ on an invariant subspace of 
$V'+R'$, where $V'$ is the multiplication \op\ by $z$ on $L^{2}(\T)$ and $R'$ is a rank one \op\ on $L^{2}(\T)$. One of the key tools in the proof of \cite{Sh} is a result of Belov \cite{Bel} concerning the distribution of values of certain functions $\varphi:\R\rightarrow\C$ defined as lacunary trigonometric series.
\par\smallskip
Our approach here is much more elementary: our unitary \op\ $U$ is a diagonal \op\ on $\ell_{2}(\N)$ with unimodular diagonal coefficients, and these coefficients as well as the two vectors $a$ and $b$ in $\ell_{2}(\N)$ which define $R=b\otimes a$ are constructed by induction in such a way that the \eve s associated to \eva s of modulus $1$ of the \op\ $U+R$ can be explicitly written down. The main idea of the proof of Theorem \ref{th1} is presented in Section 2, and the inductive construction, which is more technical, is given in Section 3.

\section{Main ingredients of the proof of Theorem \ref{th1}}

\subsection{A criterion for hypercyclicity}
The criterion for hypercyclicity which  we are going to use in the proof of Theorem \ref{th1} is stated in terms of \eve s associated to \eva s of modulus $1$ of the \op. Roughly speaking, if $T$ is a bounded linear \op\ on a complex \sep\ Banach space $X$ which has ``plenty'' of such \eve s, then $T$ is \hy. Here is the precise definition:

\begin{definition}
We say that $T\in \mathcal{B}(X)$ has a \emph{\ps}\ if there exists a continuous probability \mea\ $\sigma  $ on the unit circle $\T$ such that for any $\sigma  $-measurable subset $B$ of $\T$ with $\sigma  (B)=1$, we have $\overline{\textrm{sp}}[\ker(T-\lambda ) \textrm{ ; }\lambda \in B]=X$.
\end{definition}

When $T$ has a \ps, it is automatically \hy. This is proved in \cite{BG}. The easiest way to check this spanning property of the \eve s is to exhibit a family $(K_{i})_{i\geq 1}$ of compact perfect subsets of $\T$ and a family $(E_{i})_{i\geq 1}$ of \eve\ fields $E_{i}:K_{i}\rightarrow X$ which are continuous on $K_{i}$ and such that the vectors $E_{i}(\lambda )$, $i\geq 1,\lambda \in K_{i}$, span a dense subspace of $X$. This can be done rather easily by using the following theorem, which was proved in \cite{G}:

\begin{theorem}\label{th0}
Let $X$ be a complex \sep\ infinite-dimensional Banach space, and let $T$ be a bounded \op\ on $X$. Suppose that there exists a sequence $(u_{i})_{i\geq 1}$ of vectors of $X$ having the following properties:
 \begin{itemize}
  \item[(i)] for each $i\geq 1$, $u_{i}$ is an \eve\ of $T$ associated to an \eva\ $\mu _{i}$ of $T$, with $|\mu _{i}|=1$ and the $\mu_{i}$'s all distinct;
  
  \item[(ii)] $\textrm{sp}[u_{i}\textrm{ ; } i \geq 1]$ is dense in $X$;
  
  \item[(iii)] for any $i\geq 1$ and any $\varepsilon >0$, there exists an $n\not =i $ such that $||u_{n}-u_{i}||<\varepsilon $.
\end{itemize}
Then there exists a family $(K_{i})_{i\geq 1}$ of subsets of $\T$ which are homeomorphic to the Cantor set $2^{\omega }$ and a family $(E_{i})_{i\geq 1}$ of \eve\ fields $E_{i}:K_{i}\rightarrow X$ which are continuous on $K_{i}$ for each $i$ and which span $X$:
$\textrm{sp}[E_{i}(\lambda ) \textrm{ ; } i\geq 1,\,\lambda \in K_{i}]$ is dense in $X$. So $T$ has a \ps, and in particular $T$ is \hy.
\end{theorem}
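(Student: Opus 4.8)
The plan is to manufacture the Cantor sets $K_{i}$ and the \eve\ fields $E_{i}$ by hand, through a Cantor-type construction carried out along a dyadic tree. Hypothesis (iii) is what lets one keep \emph{splitting} the tree — each \eve\ being approximated by others, every node admits two distinct ``successors'' — hypothesis (i) forces the \eva s attached to distinct successors to be \emph{distinct}, hence quantitatively separated, and hypothesis (ii) is invoked only at the very end to get density.

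Fix $i\geq 1$. Writing $2^{<\omega}$ for the tree of finite binary words and $\beta|k$ for the length-$k$ initial segment of $\beta\in 2^{\omega}$, I would construct by induction on word length a map $s\mapsto n_{s}\in\N$ together with a summable sequence $\eps_{0}>\eps_{1}>\cdots>0$ with $\sum_{k}\eps_{k}<\frac{1}{2}\norm{u_{i}}$, such that: $n_{\emptyset}=i$ and $n_{0^{k}}=i$ for every $k$ (a ``constant branch'' $\beta^{*}=0^{\omega}$ threading $u_{i}$ through the tree); and for every node $s$ one has $n_{s0}\neq n_{s1}$ — hence $\mu_{n_{s0}}\neq\mu_{n_{s1}}$ by (i) — and $\norm{u_{n_{sj}}-u_{n_{s}}}<\eps_{|s|}$ for $j=0,1$. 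Such a tree exists: at a node on $\beta^{*}$ the left successor keeps the index $i$ (legitimate since $\norm{u_{i}-u_{i}}=0$) while (iii) supplies a right successor $n\neq i$ with $\norm{u_{n}-u_{i}}<\eps_{|s|}$; at any other node, (iii) applied to $n_{s}$ yields infinitely many admissible indices (note $u_{n}\neq u_{i}$ whenever $n\neq i$, since equality would force $\mu_{n}=\mu_{i}$), among which two distinct ones can be chosen. Since all the $u_{n_{s}}$ then stay within $\frac{1}{2}\norm{u_{i}}$ of $u_{i}$, fixing $\varphi_{i}\in X^{*}$ with $\norm{\varphi_{i}}=1$ and $\varphi_{i}(u_{i})=\norm{u_{i}}$ gives $\abs{\varphi_{i}(u_{n_{s}})}\geq\frac{1}{2}\norm{u_{i}}$ uniformly in $s$, and writing $\mu_{n_{s}}=\varphi_{i}(Tu_{n_{s}})/\varphi_{i}(u_{n_{s}})$ one reads off a constant $C_{i}$ (depending only on $\norm{T}$ and $\norm{u_{i}}$) with $\abs{\mu_{n_{sj}}-\mu_{n_{s}}}\leq C_{i}\eps_{|s|}$. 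The two inductions are interleaved: once the successors of all nodes of length $k$ have been chosen, the finitely many \eva\ gaps $\delta_{s}:=\frac{1}{2}\abs{\mu_{n_{s0}}-\mu_{n_{s1}}}>0$ (over $|s|\leq k$) are known, and one then picks $\eps_{k+1}\leq\eps_{k}/2$ small enough that $C_{i}\sum_{\ell>|s|}\eps_{\ell}<\delta_{s}$ for every such $s$; a fast enough decay meets all these tail requirements at once.

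For each branch $\beta\in 2^{\omega}$ the sequences $(\mu_{n_{\beta|k}})_{k}$ and $(u_{n_{\beta|k}})_{k}$ are Cauchy (consecutive terms differing by at most $C_{i}\eps_{k}$, resp. $\eps_{k}$); set $\lambda_{\beta}=\lim_{k}\mu_{n_{\beta|k}}\in\T$ and $E_{i}(\lambda_{\beta})=\lim_{k}u_{n_{\beta|k}}$. Passing to the limit in $Tu_{n_{\beta|k}}=\mu_{n_{\beta|k}}u_{n_{\beta|k}}$ gives $E_{i}(\lambda_{\beta})\in\ker(T-\lambda_{\beta})$, and $\norm{E_{i}(\lambda_{\beta})-u_{i}}\leq\sum_{k}\eps_{k}<\frac{1}{2}\norm{u_{i}}$ makes it nonzero. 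The map $\beta\mapsto\lambda_{\beta}$ is continuous and, thanks to the error bookkeeping, injective: if $\beta$ and $\beta'$ first split at $s$, then $\abs{\lambda_{\beta}-\lambda_{\beta'}}\geq\abs{\mu_{n_{s0}}-\mu_{n_{s1}}}-2C_{i}\sum_{\ell>|s|}\eps_{\ell}>0$. Hence $\beta\mapsto\lambda_{\beta}$ is a homeomorphism of $2^{\omega}$ onto $K_{i}:=\{\lambda_{\beta}\textrm{ ; }\beta\in 2^{\omega}\}$, so $K_{i}$ is homeomorphic to the Cantor set, and $E_{i}$ is a well-defined \eve\ field on $K_{i}$, continuous because $\beta\mapsto E_{i}(\lambda_{\beta})$ is ($\norm{E_{i}(\lambda_{\beta})-E_{i}(\lambda_{\beta'})}\leq 2\sum_{\ell\geq k}\eps_{\ell}$ once $\beta,\beta'$ agree up to level $k$). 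Finally $n_{\beta^{*}|k}=i$ for all $k$ forces $E_{i}(\lambda_{\beta^{*}})=u_{i}$, with $\lambda_{\beta^{*}}=\mu_{i}$, so $u_{i}$ is literally one of the values of $E_{i}$.

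Carrying this out for every $i$, the family $(E_{i}(\lambda))_{i\geq 1,\ \lambda\in K_{i}}$ contains all the $u_{i}$'s, hence spans a dense subspace of $X$ by (ii) — which is the assertion of the theorem. The \ps\ then follows at once: pick on each Cantor set $K_{i}$ a continuous Borel probability \mea\ $\sigma_{i}$ with full support and put $\sigma=\sum_{i\geq 1}2^{-i}\sigma_{i}$, a continuous probability \mea\ on $\T$; if $\sigma(B)=1$ then $\sigma_{i}(B)=1$ for every $i$, so $B\cap K_{i}$ is dense in $K_{i}$, and by continuity of $E_{i}$ its closed span contains $E_{i}(K_{i})\ni u_{i}$, whence $\overline{\textrm{sp}}[\ker(T-\lambda)\textrm{ ; }\lambda\in B]=X$; that $T$ is \hy\ is then the theorem of \cite{BG}. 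The delicate point in all of this is the interleaving of the two inductions — enlarging the tree versus shrinking the errors $\eps_{k}$ — so that the \eve s converge while the \eva s stay far enough apart for $\beta\mapsto\lambda_{\beta}$ to remain injective; this is precisely what keeps $K_{i}$ a genuine Cantor set rather than a possibly degenerate continuous image of $2^{\omega}$. The device of threading a constant branch through each tree, which puts the $u_{i}$ themselves among the $E_{i}(\lambda)$, is a minor but convenient extra.
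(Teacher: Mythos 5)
Your proof is correct; the paper itself does not prove Theorem \ref{th0} but refers to \cite{G} for it, and your dyadic-tree construction of the Cantor sets $K_{i}$ and continuous eigenvector fields $E_{i}$ --- with the interleaved choice of the indices $n_{s}$ and of the errors $\eps_{k}$ so that $\beta\mapsto\lambda_{\beta}$ stays injective --- is essentially the argument underlying the cited result. The delicate points (the Lipschitz control $|\mu_{n'}-\mu_{n}|\leq C_{i}\,\|u_{n'}-u_{n}\|$ obtained from a norming functional, the constant branch guaranteeing $u_{i}\in E_{i}(K_{i})$, and the passage to a continuous full-support measure $\sigma=\sum_{i}2^{-i}\sigma_{i}$ for the perfect spanning property) are all handled correctly.
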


Theorem \ref{th0} actually yields a stronger conclusion, as it is known that if $T$ has a \ps, then it is \fhy. See \cite{BG2} for the definition of frequent \hy ity and for a proof of this statement in the Hilbert space setting, and \cite{G} for a proof in the Banach space case. When the \eva s $\mu _{i}$ which appear in the assumption of Theorem \ref{th0} are $N^{th}$ roots of $1$, $N\geq 1$, then the \op\ is not only \hy\ but chaotic (it is \hy\ and has a dense set of periodic vectors). We will in the proof of Theorem \ref{th1} construct the \op\ $T$ so that the assumptions of Theorem \ref{th0} are satisfied. It will become clear in the course of the proof that we can choose the $\mu_{i}$'s to be $N^{th}$ roots of $1$, and thus the \op\ of Theorem \ref{th1} can be made chaotic and \fhy.

\subsection{Eigenvectors of rank one perturbations of diagonal operators}

We are looking for a \hy\ \op\ $T$ on the space $\ell_{2}(\N)$ endowed with the canonical basis $(e_{n})_{n\geq 1}$ of the form $T=U+R$, where $U$ is a unitary \op\ and $R$ is an \op\ of rank $1$. The unitary \op\ which we construct is a diagonal \op\ $D$ defined by $De_{n}=\lambda _{n}e_{n}$, $n\geq 1$, where $\lambda _{n}$ is for each $n\geq 1$  a complex number of modulus $1$ with the $\lambda _{n}$'s all distinct. The \op\ $R$ has the form $R=b\otimes a $, where $a=\sum_{n\geq 1}a_{n}e_{n}$ and $b=\sum_{n\geq 1}b_{n}e_{n}$ are two elements of $\ell_{2}(\N)$: $Rx=\pss{x}{b} a$ for any $x\in \ell_{2}(\N)$. Our aim is to define the coefficients $\lambda _{n}$ and the numbers $a_{n}$ and $b_{n}$ in such a way that the \op\ $T=D+R$ satisfies the assumptions of Theorem \ref{th0}.
\par\smallskip
% Let $D=\textrm{diag}(\lambda _{n}\textrm{ ; }n\geq 1)$ be a diagonal \op\ on the space $\ell_{2}(\N)$ with canonical basis $(e_{n})_{n\geq 1}$, $De_{n}=\lambda _{n}e_{n}$ with all $\lambda _{n}$'s all distinct and of modulus $1$. Let $a$ and $b$ be two elements of $\ell_{2}(\N)$, and let $R$ be the rank-one \op\ $b\otimes a$: $Rx=\pss{x}{b} a$ for any $x\in \ell_{2}(\N)$.
% \par\smallskip
Let $\lambda \in\T$ be a complex number of modulus $1$. Then with the notation above, $\lambda $ is an \eva\ of the \op\ $T=D+R$ with associated \eve\ $u\in\ell_{2}(\N)\setminus\{0\}$ \ifff\ $(D+R)u=\lambda u$, i.e. $Du+\pss{u}{b}a=\lambda u$, i.e. $(\lambda -D)u=\pss{u}{b}a$. If $\lambda \not\in\{\lambda _{n} \textrm{ ; } n\geq 1\}$, $\lambda -D$ is injective, and thus the equation above admits a non-zero solution $u$ if and only if $a\in \textrm{Ran}(\lambda -D)$, $a=(\lambda -D)a'$ where $a'\in\ell_{2}(\N)$ is unique and 
$\pss{a'}{b}=1$. If $a=\sum_{n\geq 1} a_{n}e_{n}$, then necessarily $$a'=\sum_{n\geq 1}\frac{a_{n}}{\lambda -\lambda _{n}}e_{n},$$ and $\pss{a'}{b}=1$ means that $$\sum_{n\geq 1}\frac{a_{n}\overline{b}_{n}}{\lambda -\lambda _{n}}=1.$$
We can reformulate this observation as:

\begin{lemma}\label{lem1}
 If $\lambda\in\T\setminus\{\lambda_n \textrm{ ; } n\geq 1\}$, then $\lambda$ is an \eva\ of $D+R$ \ifff\ 
$$ \sum_{n\geq 1} \left|\frac{a_n}{\lambda-\lambda_n}\right|^2<+\infty \quad \textrm{ and } \quad 
\sum_{n\geq 1} \frac{a_n\overline{b}_n}{\lambda-\lambda_n}=1.$$ In this case an associated \eve\ $u$ is given by 
$$u=\sum_{n\geq 1} \frac{a_n}{\lambda-\lambda_n}e_n.$$
\end{lemma}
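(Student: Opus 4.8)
The plan is to rewrite the eigenvalue equation $(D+R)u = \lambda u$ coordinate-by-coordinate in the canonical basis $(e_{n})_{n\geq 1}$, following the discussion preceding the statement, the one delicate point being to check that the scalar $\pss{u}{b}$ attached to an eigenvector cannot vanish.

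For the direct implication, I would suppose that $u \in \ell_{2}(\N)\setminus\{0\}$ satisfies $(D+R)u = \lambda u$ and write $u = \sum_{n\geq 1} u_{n} e_{n}$. Since $Ru = \pss{u}{b}\,a$, the equation is equivalent to $(\lambda - D)u = \pss{u}{b}\,a$, that is, to $(\lambda - \lambda_{n})u_{n} = \pss{u}{b}\,a_{n}$ for every $n\geq 1$. If $\pss{u}{b}$ were $0$, then, as $\lambda \neq \lambda_{n}$ for all $n$, we would get $u_{n} = 0$ for every $n$, contradicting $u\neq 0$; hence $\pss{u}{b}\neq 0$, and, replacing $u$ by $\pss{u}{b}^{-1}u$, we may assume $\pss{u}{b}=1$. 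Then $u_{n} = a_{n}/(\lambda - \lambda_{n})$ for every $n$, so $u = \sum_{n\geq 1} \frac{a_{n}}{\lambda - \lambda_{n}} e_{n}$; that $u$ belongs to $\ell_{2}(\N)$ is precisely the first condition, while expanding $\pss{u}{b}=1$ (the series being absolutely convergent by Cauchy--Schwarz, since both $u$ and $b$ lie in $\ell_{2}(\N)$) gives the second one.

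For the converse one runs the same computation backwards: assuming the two conditions, the first guarantees that $u := \sum_{n\geq 1} \frac{a_{n}}{\lambda - \lambda_{n}} e_{n}$ is a well-defined vector of $\ell_{2}(\N)$, which is nonzero since $\pss{u}{b} = \sum_{n\geq 1} \frac{a_{n}\overline{b}_{n}}{\lambda - \lambda_{n}} = 1$; and a one-line check of coordinates shows that the $n$-th coordinate of $Du + Ru$ equals $\lambda_{n} \frac{a_{n}}{\lambda-\lambda_{n}} + a_{n} = \frac{\lambda a_{n}}{\lambda - \lambda_{n}} = \lambda u_{n}$, so that $(D+R)u = \lambda u$ and $\lambda$ is an \eva\ of $D+R$ with \eve\ $u$.

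There is no real obstacle here. The only points deserving attention are that $\pss{u}{b}\neq 0$ for any \eve\ $u$ associated to such a $\lambda$ --- which is what forbids the degenerate situation and is the reason one argues coordinate-wise rather than invoking an inverse of $\lambda - D$, an \op\ that is injective but, when the $\lambda_{n}$ cluster at $\lambda$, not bounded below --- and that the convergence of the series $\sum_{n} a_{n}\overline{b}_{n}/(\lambda - \lambda_{n})$ is not granted by $\lambda \neq \lambda_{n}$ alone but follows once the first displayed condition, $u \in \ell_{2}(\N)$, is known. (Nothing uses $|\lambda| = 1$: the statement holds verbatim for any $\lambda \in \C\setminus\{\lambda_{n} \textrm{ ; } n\geq 1\}$.)
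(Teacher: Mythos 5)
Your proof is correct and follows essentially the same route as the paper: the paper also reduces the eigenvalue equation to $(\lambda-D)u=\pss{u}{b}a$, uses injectivity of $\lambda-D$ (equivalently, your coordinate-wise argument) to see that $\pss{u}{b}\neq 0$ and to normalize $\pss{u}{b}=1$, and then identifies the first condition with $a\in\textrm{Ran}(\lambda-D)$ and the second with $\pss{a'}{b}=1$. Your added remarks --- that $\lambda-D$ need not be bounded below, that the series $\sum_{n}a_{n}\overline{b}_{n}/(\lambda-\lambda_{n})$ converges by Cauchy--Schwarz once $u\in\ell_{2}(\N)$, and that unimodularity of $\lambda$ plays no role --- are accurate but do not change the argument.
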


\subsection{Strategy of the proof of Theorem \ref{th1}}
Let $j:\{1,2,\ldots\}\longrightarrow \{1,2,\ldots\}$ be a function having the following properties:

$\bullet$ $j(1)=1$;

$\bullet$ $j(n)<n$ for every $n\geq 2$;

$\bullet$ for any $k\geq 1$ the set $\{n\geq 2 \textrm{ ; } j(n)=k\}$ is infinite, i.e. $j$ takes every value $k\geq 1$ infinitely often.
\par\smallskip
The proof of Theorem \ref{th1} will be carried out via an induction argument. As Step $n$, $n\geq 1$, we define two unimodular numbers $\lambda _{n}$ and $\mu _{n}$,
a complex number $a_{n}$ and
an $n$-tuple $b^{(n)}=(b_{1}^{(n)},\ldots, b_{n}^{(n)})$ of complex numbers
such that the following properties hold true:

\begin{itemize}
 \item[(1)] if $D_{n}$ denotes the diagonal \op\ on $\C^{n}$ with diagonal coefficients $\lambda _{1},\ldots, \lambda _{n}$, with
$\lambda_n\not\in\{\lambda_1,\ldots,\lambda_{n-1}\}$,
 and $R_{n}$ denotes the rank $1$ \op\ $b^{(n)}\otimes a^{(n)}$ on $\C^{n}$, i.e. 
 $R_{n}x=\pss{x}{b^{(n)}}a^{(n)}$ for any $x\in\C^{n}$, where $a^{(n)}=\sum_{j=1}^{n}a_{j}e_{j}$ and $b^{(n)}=\sum_{j=1}^{n}b_{j}^{(n)}e_{j}$, then the \op\ $T_{n}=D_{n}+R_{n}$ acting on 
 $\C^{n}$ has $n$ distinct \eva s which are the unimodular numbers $\mu _{1},\ldots, \mu _{n}$. Moreover
 $\mu _{n}$ does not belong to the set of distinct numbers $\{\lambda _{1},\ldots, \lambda _{n},\mu_{1},\ldots, \mu _{n-1}\}$, and the vector
 $$u_{i}^{(n)}=\sum_{j=1}^{n}\frac{a_{j}}{\mu _{i}-\lambda _{j}}e_{j}$$ is an \eve\ of $T_{n}$ associated to the \eva\ $\mu _{i}$. Additionally for any $n\geq 1$, $\textrm{sp}[u_{i}^{(n)} \textrm{ ; } i=1,\ldots,n]=\textrm{sp}[e_{1},\ldots, e_{n}]$. Thus there exists a positive constant $C_{n}$ such that for any $x\in\C^{n}$ with $x=\sum_{j=1}^{n}x_{j}e_{j}=\sum_{i=1}^{n}\alpha _{i}u_{i}^{(n)}$, we have
 $$\sum_{i=1}^{n}|\alpha _{i}|\leq C_{n}\left(\sum_{j=1}^{n}|x_{j}|^{2}\right)^{\frac{1}{2}}$$
 
 \item[(2)] $C_{n}>C_{n-1}$ and $C_n>2$
 
 \item[(3)]  $0<|a_{n}|<2^{-n}$
 
 \item[(4)] $|b_{n}^{(n)}|<2^{-n}$
 
 \item[(5)] we have $$\left(\sum_{i=1}^{n-1}|b_{i}^{(n)}-b_{i}^{(n-1)}|^{2}\right)^{\frac{1}{2}}<2^{-n}$$
 
 \item[(6)] for any $i=1,\ldots,n-1$, $$||u_{i}^{(n)}-u_{i}^{(n-1)}||<\frac{2^{-n}}{C_{n-1}}$$
 
 \item[(7)] $||u_{j(n)}^{(n)}-u_{n}^{(n)}||<2^{-n}$
 
 \item[(8)] for any $k=1,\ldots, n-1$ and any $i=1,\ldots,k$, $||T_{n}u_{i}^{(k)}-\mu_{i}u_{i}^{(k)}||<3\,.\,2^{-(k-1)}$.
\end{itemize}
\par\smallskip
Suppose that the construction of the sequences $(\lambda _{n})_{n\geq 1}$, $(\mu _{n})_{n\geq 1}$, $(a_{n})_{n\geq 1}$ and $(b^{(n)})_{n\geq 1}$
has been carried out in such a way that properties (1)-(8) are satisfied. 
By (2) the vector $a=\sum_{n\geq 1}a_{n}e_{n}$ belongs to $\ell_{2}(\N)$. By (4) and (5), we have 
\par\smallskip
\begin{itemize}
 \item[(9')] $||b^{(n)}-b^{(n-1)}||=||\sum_{i=1}^{n-1}(b_{i}^{(n)}-b_{i}^{(n-1)})e_{i}+b_{n}^{(n)}e_{n}||< 2.2^{-n}=2^{-(n-1)}$
\end{itemize}
\par\smallskip
so that the sequence $(b^{(n)})_{n\geq 1}$ converges in $\ell_{2}(\N)$ to a certain vector $b=\sum_{n\geq 1}b_{n}e_{n}$, with
\begin{itemize}
\par\smallskip
 \item[(10')] $||b^{(n)}-b||\leq\sum_{j\geq n}||b^{(j+1)}-b^{(j)}||\leq \sum_{j\geq n} 2^{-j} < 2^{-(n-1)}$.
\end{itemize}
\par\smallskip
 So it makes sense to define the rank one \op\ $R=b\otimes a$ on $\ell_{2}(\N)$. Let $D$ be the diagonal \op\ $D=\textrm{diag}(\lambda _{n}\textrm{ ; }n\geq 1)$ on $\ell_{2}(\N)$. We are going to show, using Theorem \ref{th0}, that $D+R$ is then \hy, which will prove Theorem \ref{th1}.
\par\smallskip
\begin{proof}[Proof of Theorem \ref{th1} modulo the inductive construction]
For any $n\geq 1$, let $P_{n}$ denote the canonical projection of $\ell_{2}(\N) $ onto $\textrm{sp}[e_{1},\ldots, e_{n}]$. For any $x=\sum_{j\geq 1}x_{j}e_{j}\in \ell_{2}(\N)$, we have
$$T_{n}P_{n} x=T_{n}\left(\sum_{j= 1}^{n}x_{j}e_{j}\right)= \sum_{j= 1}^{n}\lambda _{j}x_{j}e_{j}+
\pss{x}{b^{(n)}}a^{(n)}.$$ 
Since $a^{(n)}\rightarrow a$, $b^{(n)}\rightarrow b$ and $\sup_{n\geq 1}||b^{(n)}||$ is finite, 
% 
% 
% Hence
% \begin{eqnarray*}
% ||T_{n}P_{n}x-(D+R)x||&\leq& \left(\sum_{j= n+1}^{+\infty }|x_{j}|^{2}\right)^{\frac{1}{2}}+|\pss{x}{b^{(n)}}|\,.\,||a^{(n)}-a||+ |\pss{x}{b^{(n)}-b}|\,.\,||a||
% \end{eqnarray*}
% so that
% \begin{eqnarray*}
% ||T_{n}P_{n}x-(D+R)x|| &\leq& \left(\sum_{j= n+1}^{+\infty }|x_{j}|^{2}\right)^{\frac{1}{2}}+||x||\,.\,||b^{(n)}||\, \left(\sum_{j= n+1}^{+\infty }|a_{j}|^{2}\right)^{\frac{1}{2}}\\
% &+&||x||\,.\,||b^{(n)}-b||\,.\,||a||.
% \end{eqnarray*}
% Since $\sup_{n\geq 1}||b^{(n)}||$ is finite, the second term tends to zero as $n$ tends to infinity. For the third term, we have by (10') that $||b^{(n)}-b||<2^{-(n-1)}$,
% % \begin{eqnarray*}
% % \left|\sum_{j= 1}^{n} x_{j}(\overline{b}_{j}^{(n)}-\overline{b}_{j})\right| &\leq&
% % ||x|| \,  \left(\sum_{j= 1}^{n} |{b}_{j}^{(n)}-{b}_{j}|^{2} \right)^{\frac{1}{2}}
% % \leq  ||x||\,||b^{(n)}-b||\\
% % &\leq& ||x||\, .\,\sum_{j=n}^{+\infty }||b^{(j+1)}-b^{(j)}||
% % \leq  ||x||\,.\,\sum_{j\geq n}2^{-j}=||x|| \, .\,2^{-(n-1)}
% % \end{eqnarray*}
% which also tends to zero as $n$ tends to infinity. Thus we get that for any $x\in\ell_{2}(\N)$, 
$||T_{n}P_{n}x-(D+R)x||$ tends to zero as $n$ tends to infinity. Applying this to $x=u_{i}^{(k)}$ yields that for any $k\geq 1$ and any $i=1,\ldots,k$,
$||Tu_{i}^{(k)}-\mu _{i}u_{i}^{(k)}||\leq 3\,.\, 2^{-(k-1)}$ by (8), as $T_{n}P_{n}u_{i}^{(k)}=T_{n}u_{i}^{(k)}$ for any $n\geq k$. By (6) the sequence $(u_{i}^{(n)})_{n\geq i}$ converges as $n$ tends to infinity to a certain vector $u_{i}\in\ell_{2}(\N)$, which is nothing but
$$u_{i}=\sum_{j=1}^{+\infty }\frac{a_{j}}{\mu _{i}-\lambda _{j}}e_{j}.$$ It is a non zero vector, and making $k$ tend to infinity in the inequalities above shows that $Tu_{i}=\mu _{i}u_{i}$, so that $u_{i}$ is an \eve\ of $T$ associated to the \eva\ $\mu _{i}$. 
\par\smallskip
Let us now prove that the sequence $(u_{i})_{i\geq 1}$ satisfies the assumptions of Theorem \ref{th0}: assertion ${(i)}$ is true by construction, as the $\mu _{i}$'s are all distinct. As for assertion ${(ii)}$, let us consider a vector $x=\sum_{j=1}^{r}x_{j}e_{j}$ with finite support and $||x||\leq 1$. Writing $x$ as $x=\sum_{i=1}^{r}\alpha _{i}u_{i}^{(r)}$, we have by (1)
\begin{eqnarray*}
||x-\sum_{i=1}^{r}\alpha _{i}u_{i}||&\leq& \left(\sum_{i=1}^{r}|\alpha _{i}|\right)\, \sup_{i=1,\ldots, r}||u_{i}-u_{i}^{(r)}||\\
&\leq& C_{r}\, ||x||\,\sup_{i=1,\ldots,r}\sum_{k\geq r+1}||u_{i}^{(k)}-u_{i}^{(k-1)}||\\
&\leq& C_{r} \, \sum_{k\geq r+1}\frac{2^{-k}}{C_{k-1}}\leq 2^{-r}
\end{eqnarray*}
by (6).
Hence for any $\varepsilon >0$ there exists a vector $y\in\textrm{sp}[u_{j}\textrm{ ; }j\geq 1]$ such that $||x-y||<\varepsilon $, and this proves assertion $(ii)$. Assertion $(iii)$ is a consequence of (7): for any $k\geq 1$ let $A_{k}$ be the set $A_{k}=\{n\geq 2 \textrm{ ; } j(n)=k\}$.  Observe that if $n\in A_k$, $n\geq k+1$. For any $n\in A_{k}$ we have $||u_{k}^{(n)}-u_{n}^{(n)}||<2^{-n}$ by (7). Let us estimate $||u_{n}-u_{k}||$:
\begin{eqnarray*}
 ||u_{n}-u_{k}||&\leq& ||u_{n}-u_{n}^{(n)}||+||u_{n}^{(n)}-u_{k}^{(n)}||+||u_{k}^{(n)}-u_{k}||\\
 &\leq&\sum_{m\geq n+1}||u_{n}^{(m)}-u_{n}^{(m-1)}||+2^{-n}+\sum_{m\geq n+1}||u_{k}^{(m)}-u_{k}^{(m-1)}||\\
 &\leq& 2\, \sum_{m\geq n+1}2^{-m}+2^{-n}=5.2^{-n}.
\end{eqnarray*}
Thus  if $\varepsilon $ is any positive number, since $A_{k}$ is infinite there exists an $n\in A_{k}$ such that $||u_{n}-u_{k}||<\varepsilon $, and assertion $(iii)$ of Theorem \ref{th0} is satisfied too. We have thus proved that $T$ is \hy, which proves Theorem \ref{th1} modulo the construction of $\lambda _{n}$, $\mu _{n}$, $a_{n}$ and $b^{(n)}$ for each $n\geq 1$.
\end{proof}

\section{The induction step}
In order to complete the proof of Theorem \ref{th1}, we now have to carry out the induction step. Before starting, let us reformulate the first half of condition (1) in a more convenient way: saying that the \op\ $T_n=D_n+R_n$ acting on $\C^{n}$ has $n$ distinct \eva s $\mu _{1},\ldots, \mu _{n}$ exactly means that we have
\begin{equation}\label{eq1}\tag{E}
\sum_{j=1} ^{n}\frac{a_{j}\overline{b}_{j}^{(n)}}{\mu _{i}-\lambda _{j}}=1
\qquad \textrm{for any }  i=1,\ldots,n.
\end{equation}

Let $M_{n}\in\mathcal{M}_{n}(\C)$ be the matrix $M_{n}=(m_{ij})_{1\leq i,j\leq n}$ with $m_{ij}=\frac{1}{\mu _{i}-\lambda _{j}}$. These coefficients are well-defined, as we choose at each step $k$
$\lambda_{k}\not\in\{\mu_{1},\ldots, \mu _{k}\}$
and $\mu _{k}\not\in\{\lambda _{1},\ldots, \lambda _{k}\}$. Then equations (\ref{eq1}) can be rewritten as the matrix equation
$$
\begin{pmatrix}
 \dfrac{1}{\mu _{1}-\lambda _{1}}&\ldots&\dfrac{1}{\mu _{1}-\lambda _{n}}\\
 \vdots&&\vdots\\
  \dfrac{1}{\mu _{n}-\lambda _{1}}&\ldots&\dfrac{1}{\mu _{n}-\lambda _{n}}
\end{pmatrix}
.
\begin{pmatrix}
a_{1} \overline{b}_{1}^{(n)}\\
a_{2} \overline{b}_{2}^{(n)}\\
\vdots\\
a_{n} \overline{b}_{n}^{(n)}
\end{pmatrix}
=
\begin{pmatrix}
1\\
1\\
\vdots\\
1
\end{pmatrix},
\quad 
\textrm{ i.e. }
\quad 
M_{n}
\begin{pmatrix}
a_{1} \overline{b}_{1}^{(n)}\\
a_{2} \overline{b}_{2}^{(n)}\\
\vdots\\
a_{n} \overline{b}_{n}^{(n)}
\end{pmatrix}
=
\begin{pmatrix}
1\\
1\\
\vdots\\
1
\end{pmatrix}.
$$
\par\smallskip
We are now ready to begin the construction.
\par\smallskip
$\bullet$ We start by taking $\lambda _{1}=1$ and $a_{1}=4^{-1}$ for instance. Then we take $\mu _{1}\in\T$ with $\mu _{1}\not=\lambda _{1}$ and $|\mu _{1}-\lambda _{1}|$ so small (with $|\mu _{1}-\lambda _{1}|<1$ in particular) that if we set $$\overline{b}_{1}^{(1)}=\frac{\mu _{1}-\lambda _{1}}{a_{1}},$$ then $|b_{1}^{(1)}|<2^{-1}$. Of course $Te_{1}=\mu _{1}e_{1}$.
\par\smallskip
$\bullet$ Suppose now that the construction has been carried out until Step $n-1$. We have to construct $\lambda _{n}\in\T$, $\mu _{n}\in\T$, $a_{n}\in\C$ and $b^{(n)}\in\C^{n}$ such that properties (1)-(8) hold true. First of all, let $\varepsilon >0$ be a positive number which is so small that:

\begin{align}
\tag{a} & 0<\varepsilon <4^{-(n+1)}\\
\tag{b} &\prod_{j=1}^{n}(1+2^{-j})\left(\sum_{j=1}^{n-1}\frac{1}{|\mu _{j(n)}-\lambda _{j}|^{2}}\right)^{\frac{1}{2}} \varepsilon <4^{-(n+1)}\\
\tag{c} &\frac{1}{\min_{j=1,\ldots, n-1}|a_{j}|}\left(1+\left(\sum_{j=1}^{n-1}|a_{j}|^{2}\right)^{\frac{1}{2}}\right)
 \prod_{j=1}^{n}(1+2^{-j})\,\varepsilon <2^{-n}.
\end{align}
We first construct the $n^{th}$ diagonal coefficient $\lambda _{n}$ of $D_{n}$: it is chosen very close to $\mu _{j(n)}$. More precisely: by the induction assumption $\mu _{j(n)}$ is an \eva\ of the matrix $M_{n-1}$, so that
\begin{equation*}
\sum_{j=1} ^{n-1}\frac{a_{j}\overline{b}_{j}^{(n-1)}}{\mu _{j(n)}-\lambda _{j}}=1.
\end{equation*}
It follows that there exists $\delta >0$ such that for any $\lambda \in\T\setminus\{\lambda _{1},\ldots, \lambda _{n-1}\}$ with $|\lambda -\mu _{j(n)}|<\delta $, we have
\begin{align*}
 & \bullet \quad \left|1-\sum_{j=1} ^{n-1}\frac{a_{j}\overline{b}_{j}^{(n-1)}}{\lambda -\lambda _{j}} \right|<\varepsilon \\
 & \bullet \quad \prod_{j=1}^{n}(1+2^{-j})\left(\sum_{j=1}^{n-1}\frac{1}{|\lambda -\lambda _{j}|^{2}}\right)^{\frac{1}{2}} \varepsilon <4^{-(n+1)}\\
 & \bullet \quad \left(\sum_{j=1}^{n-1}|a_{j}|^{2}\,.\,\left|\frac{1}{\mu _{j(n)}-\lambda _{j}}-\frac{1}{\lambda -\lambda _{j}}\right|^{2}\right)^{\frac{1}{2}}<\varepsilon .
\end{align*}
We choose $\lambda _{n}\in\T\setminus\{\lambda _{1},\ldots,\lambda _{n-1},\mu _{1},\ldots,\mu _{n-1}\}$ such that $|\lambda _{n}-\mu _{j(n)}|<\delta $. We then have:
\begin{align}
 & \tag{d} \left|1-\sum_{j=1} ^{n-1}\frac{a_{j}\overline{b}_{j}^{(n-1)}}{\lambda_{n} -\lambda _{j}} \right|<\varepsilon \\
 & \tag{e} \prod_{j=1}^{n}(1+2^{-j})\left(\sum_{j=1}^{n-1}\frac{1}{|\lambda_{n} -\lambda _{j}|^{2}}\right)^{\frac{1}{2}} \varepsilon <4^{-(n+1)}\\
 & \tag{f} \left(\sum_{j=1}^{n-1}|a_{j}|^{2}\,.\,\left|\frac{1}{\mu _{j(n)}-\lambda _{j}}-\frac{1}{\lambda_{n} -\lambda _{j}}\right|^{2}\right)^{\frac{1}{2}}<\varepsilon .
\end{align}
Once $\lambda _{n}$ is chosen, the next step is to choose $\mu _{n}$. We take $\mu _{n}\in\T\setminus\{\lambda _{1},\ldots,\lambda _{n},\mu _{1},\ldots,\mu _{n-1}\}$ with $|\mu _{n}-\lambda _{n}|$
so small that
\begin{align}
 & \tag{g} \left|1-\sum_{j=1} ^{n-1}\frac{a_{j}\overline{b}_{j}^{(n-1)}}{\mu _{n} -\lambda _{j}} \right|<\varepsilon \\
 & \tag{h} \prod_{j=1}^{n}(1+2^{-j})\left(\sum_{j=1}^{n-1}\frac{1}{|\mu _{n} -\lambda _{j}|^{2}}\right)^{\frac{1}{2}} \varepsilon <4^{-(n+1)}\\
 & \tag{i} \left(\sum_{j=1}^{n-1}|a_{j}|^{2}\,.\,\left|\frac{1}{\mu _{j(n)}-\lambda _{j}}-\frac{1}{\mu _{n} -\lambda _{j}}\right|^{2}\right)^{\frac{1}{2}}<\varepsilon 
 \end{align}
 and
 \begin{align}
 & \tag{j} \frac{|\mu _{n}-\lambda _{n}|}{|\mu _{i}-\lambda _{n}|}<\frac{2^{-n}}{C_{n-1}}\quad \textrm{for any } i=1,\ldots,n-1\\
 & \tag{k}||M_{n}^{-1}||\leq(1+2^{-n})\,||M_{n-1}^{-1}||.
\end{align}

It is easy to see that conditions (g), (h), (i) and (j) can be fullfilled if $|\mu _{n}-\lambda _{n}|$ is small enough. That condition (k) can be made to hold too is not so immediate, but not too hard either: first of all for any $\varepsilon '>0$ there exists a $\delta '>0$ such that if $|\mu _{n}-\lambda _{n}|<\delta '$, then
$$\left|\frac{\det M_{n-1}}{(\mu _{n}-\lambda _{n})\det M_{n}}-1\right|<\varepsilon '.$$ 
Indeed 
$(\mu _{n}-\lambda _{n})\det M_{n}=\det \tilde{M}_{n}$, where $ \tilde{M}_{n}$ is the matrix obtained from $M_{n}$ by multiplying its last line by $(\mu _{n}-\lambda _{n})$. If $|\mu _{n}-\lambda _{n}|$
is extremely small, the coefficients $( \tilde{M}_{n})_{nj}$, $j=1,\ldots, n-1$, are almost equal to zero, while
$( \tilde{M}_{n})_{nn}=1$. Thus $\det  \tilde{M}_{n}$ can be made as close as we wish to $\det M_{n-1}$, and it is possible to ensure that $$\left|\frac{1}{(\mu _{n}-\lambda _{n})\det M_{n}}-\frac{1}{\det M_{n-1}}\right|<
\frac{\varepsilon '}{|\det M_{n-1}|},$$
from which it follows that
$$\left|\frac{\det M_{n-1}}{(\mu _{n}-\lambda _{n})\det M_{n}}-1\right|<\varepsilon '.$$ 
Notice that $$ \left|\frac{1}{\det M_{n}}-\frac{\mu _{n}-\lambda _{n}}{\det M_{n-1}}\right|<\varepsilon ' \frac{|\mu _{n}-\lambda _{n}|}{|\det M_{n-1}|} \cdot $$
% 
% 
% if $|\mu _{n}-\lambda _{n}|$ is extremely small compared to all the quantities $|\mu _{i}-\lambda _{j}|$, $i=1,\ldots,n$, $j=1,\ldots,n$, $(i,j)\not =(n,n)$, then the dominant terms in the computation of the determinant $\det M_{n}$ are those involving the $(n,n)$ coefficient of $M_{n}$, $\frac{1}{\mu _{n}-\lambda _{n}}$, and thus $\det M_{n}$ can be made as close as we wish to
% $\frac{1}{\mu _{n}-\lambda _{n}}\det M_{n-1}$. In this way we can also ensure that
% $$|\frac{1}{\det M_{n}}-(\mu _{n}-\lambda _{n})\frac{1}{\det M_{n-1}}|<\varepsilon '.$$ 
\par\smallskip
Then the formula
$M_{n}^{-1}=\frac{1}{\det M_{n}}{}^{t}\textrm{com}M_{n}$ yields that:

-- the coefficients $(n,j)$ and $(i,n)$ of $M_{n}^{-1}$, $i,j=1,\ldots,n$, can be made arbitrarily small if $|\mu _{n}-\lambda _{n}|$ is small enough, as
$({}^{t}\textrm{com}M_{n})_{nj} $ and $({}^{t}\textrm{com}M_{n})_{in}$ do not depend on $|\mu_n-\lambda_n|$, while $\det M_n$ can be made arbitrarily small with $|\mu _{n}-\lambda _{n}|$;

-- the coefficients $(i,j)$, $i,j=1,\ldots,n-1$ can be made very close to the coefficients $(M_{n-1}^{-1})_{ij}$. Indeed the dominant term in the computation of $({}^{t}\textrm{com}M_{n})_{ij}$ is the one involving $\frac{1}{\mu _{n}-\lambda _{n}}$, that is $\frac{1}{\mu _{n}-\lambda _{n}} ({}^{t}\textrm{com}M_{n-1})_{ij}$. So $(M_{n}^{-1})_{ij}$ can be made as close as we wish to $$\frac{1}{(\mu _{n}-\lambda _{n})\det M_{n}}({}^{t}\textrm{com}M_{n-1})_{ij}=
\frac{\det M_{n-1}}{(\mu _{n}-\lambda _{n})\det M_{n}}(M_{n-1}^{-1})_{ij}.$$

Hence $M_{n}^{-1}$ is very close to the matrix $A_{n}$ for the \op\ norm on $\mathcal{M}_{n}(\C)$, where $(A_{n})_{ij}=(M_{n-1}^{-1})_{ij}$ for $i,j=1,\ldots,n-1$ and $(A_{n})_{in}=(A_{n})_{nj}=0$ for $i,j=1,\ldots,n$. Hence there exists $\gamma >0$ such that $||M_{n}^{-1}||\leq (1+2^{-n})||M_{n-1}^{-1}||$ if $|\mu _{n}-\lambda _{n}|<\gamma $, and property (k) is satisfied if $\mu _{n}$ is sufficiently close to $\lambda _{n}$.
\par\smallskip
Now that $\lambda _{n}$ and $\mu _{n}$ are constructed, it remains to fix $a_{n}$ and $b^{(n)}$. We take first $$a_{n}=2^{-(n+1)}|\mu _{n}-\lambda _{n}|.$$ There is now not much room for the choice of $b^{(n)}$: we must have
$$M_{n}\,
\begin{pmatrix}
 a_{1}\overline{b}_{1}^{(n)}\\
 \vdots\\
 a_{n}\overline{b}_{n}^{(n)}
\end{pmatrix}=
\begin{pmatrix}
 1\\
 \vdots\\
 1
\end{pmatrix}
\quad \textrm{i.e.}\quad 
\begin{pmatrix}
 a_{1}\overline{b}_{1}^{(n)}\\
 \vdots\\
 a_{n}\overline{b}_{n}^{(n)}
\end{pmatrix}= M_{n}^{-1}
\begin{pmatrix}
 1\\
 \vdots\\
 1
\end{pmatrix}.
$$
The numbers $\overline{b}_{j}^{(n)}$ are completely determined by these equations, and so we set
$$\overline{b}_{i}^{(n)}=\frac{1}{a_{i}}\sum_{j=1}^{n}(M_{n}^{-1})_{ij}.$$
\par\smallskip
It now remains to check that with this construction, properties (1)-(8) are satisfied:
\par\smallskip
$\bullet$ property (1) is true by construction, since
$$M_{n}\,
\begin{pmatrix}
 a_{1}\overline{b}_{1}^{(n)}\\
 \vdots\\
 a_{n}\overline{b}_{n}^{(n)}
\end{pmatrix}=
\begin{pmatrix}
 1\\
 \vdots\\
 1
\end{pmatrix}.$$ 
\par\smallskip
$\bullet$ property (2) is trivially true if $C_{n}$ is sufficiently large.
\par\smallskip
$\bullet$ as $a_{n}=2^{-(n+1)}|\mu _{n}-\lambda _{n}|$, $0<|a_{n}|<2^{-n}$, so (3) is true.
\par\smallskip
$\bullet$ let us now check property (5). We have
$$M_{n-1}\,
\begin{pmatrix}
 a_{1}\overline{b}_{1}^{(n-1)}\\
 \vdots\\
 a_{n-1}\overline{b}_{n-1}^{(n-1)}
\end{pmatrix}=
\begin{pmatrix}
 1\\
 \vdots\\
 1
\end{pmatrix}.
$$
Hence
$$
M_{n}\,
\begin{pmatrix}
 a_{1}\overline{b}_{1}^{(n-1)}\\
 \vdots\\
 a_{n-1}\overline{b}_{n-1}^{(n-1)}\\
 0
\end{pmatrix}=
\begin{pmatrix}
 1\\
 \vdots\\
 1\\
 c_{n}
\end{pmatrix}
\quad \textrm{where} \quad c_{n}=\sum_{j=1}^{n-1}\frac{a_{j}\overline{b}_{j}^{(n-1)}}{\mu _{n}-\lambda _{j}}\cdot$$
By (g) we have $|1-c_{n}|<\varepsilon $, so that
$$\left|\left|
M_{n}\,
\begin{pmatrix}
 a_{1}(\overline{b}_{1}^{(n)}-\overline{b}_{1}^{(n-1)})\\
 \vdots\\
 a_{n-1}(\overline{b}_{n-1}^{(n)}- \overline{b}_{n-1}^{(n-1)})\\
  a_{n}\overline{b}_{n}^{(n)}
\end{pmatrix}
\right|\right|=|1-c_{n}|<\varepsilon .$$
Hence
$$\left|\left|
\begin{pmatrix}
 a_{1}(\overline{b}_{1}^{(n)}- \overline{b}_{1}^{(n-1)})\\
 \vdots\\
 a_{n-1}(\overline{b}_{n-1}^{(n)}- \overline{b}_{n-1}^{(n-1)})\\
  a_{n}\overline{b}_{n}^{(n)}
\end{pmatrix}
\right|\right|< \varepsilon \,||M_{n}^{-1}||\leq \varepsilon \,(1+2^{-n})\,||M_{n-1}^{-1}||\leq\ldots\leq
\varepsilon \prod_{j=1}^{n}(1+2^{-j})
$$
by (k) and the fact that $||M_1^{-1}||=|\mu_1-\lambda_1|<1$,
that is
$$\left(\sum_{j=1}^{n-1}|a_{j}|^{2}\,|b_{j}^{(n)}-b_{j}^{(n-1)}|^{2}+|a_{n}b_{n}^{(n)}|^{2}\right)^{\frac{1}{2}}<\varepsilon \prod_{j=1}^{n}(1+2^{-j}).$$
In particular
\begin{align}
 \tag{l} \left(\sum_{j=1}^{n-1}|a_{j}|^{2}\,|b_{j}^{(n)}-b_{j}^{(n-1)}|^{2}\right)^{\frac{1}{2}}<\varepsilon \prod_{j=1}^{n}(1+2^{-j})
\end{align}
 so that
 $$\min_{j=1,\ldots,n-1}\,|a_{j}|\,\left(\sum_{j=1}^{n-1}|b_{j}^{(n)}-b_{j}^{(n-1)}|^{2}\right)^{\frac{1}{2}}<\varepsilon \prod_{j=1}^{n}(1+2^{-j}).$$ By (c) we get that
 $$\left(\sum_{j=1}^{n-1}|b_{j}^{(n)}-b_{j}^{(n-1)}|^{2}\right)^{\frac{1}{2}}<2^{-n},$$ which is property (5).
 \par\smallskip
$\bullet$ property (4) is a consequence of the equations
$$\sum_{j=1} ^{n}\frac{a_{j}\overline{b}_{j}^{(n)}}{\mu _{n} -\lambda _{j}} =1,\quad \textrm{i.e.}\quad 
\sum_{j=1} ^{n-1}\frac{a_{j}\overline{b}_{j}^{(n)}}{\mu _{n} -\lambda _{j}} +\frac{a_{n}\overline{b}_{n}^{(n)}}{\mu _{n} -\lambda _{n}} =1$$
and
$$\sum_{j=1} ^{n-1}\frac{a_{j}\overline{b}_{j}^{(n-1)}}{\mu _{j(n)} -\lambda _{j}} =1.$$
We have
\begin{eqnarray*}
a_{n}\overline{b}_{n}^{(n)}&=&(\mu _{n}-\lambda _{n})\left(1-
\sum_{j=1} ^{n-1}\frac{a_{j}\overline{b}_{j}^{(n)}}{\mu _{n} -\lambda _{j}} \right)
=(\mu _{n}-\lambda _{n})\left(\sum_{j=1}^{n-1}\left(
\frac{a_{j}\overline{b}_{j}^{(n-1)}}{\mu _{j(n)} -\lambda _{j}} -\frac{a_{j}\overline{b}_{j}^{(n)}}{\mu _{n} -\lambda _{j}} 
\right)\right)\\
&=& (\mu _{n}-\lambda _{n})\left(
\sum_{j=1}^{n-1} a_{j} \left(\frac{1}{\mu _{j(n)} -\lambda _{j}}-\frac{1}{\mu _{n} -\lambda _{j}}\right)\overline{b}_{j}^{(n-1)}\right.\\
&+&\left.\sum_{j=1}^{n-1} \frac{a_{j}}{\mu _{n}-\lambda _{j}}(\overline{b}_{j}^{(n-1)}-\overline{b}_{j}^{(n)})
\right).
\end{eqnarray*}
Thus
\begin{eqnarray*}
 |a_{n}\overline{b}_{n}^{(n)}|&\leq&|\mu _{n}-\lambda _{n}|\left(
 \sum_{j=1}^{n-1}|a_{j}|^{2}\left|\frac{1}{\mu _{j(n)}-\lambda _{j}}-\frac{1}{\mu _{n}-\lambda _{j}}\right|^{2}
 \right)^{\frac{1}{2}}\,\left(\sum_{j=1}^{n-1}|{b}_{j}^{(n-1)}|^{2}\right)^{\frac{1}{2}}\\
 &+&|\mu _{n}-\lambda _{n}|\left(\sum_{j=1}^{n-1}|a_{j}|^{2}|{b}_{j}^{(n-1)}-{b}_{j}^{(n)}|^{2}\right)^{\frac{1}{2}}\left(\sum_{j=1}^{n-1}\frac{1}{|\mu _{n}-\lambda _{j}|^{2}}\right)^{\frac{1}{2}}.
 \end{eqnarray*}
 Now by (i) and (l), we have
 \begin{eqnarray*}
|a_{n}\overline{b}_{n}^{(n)}| &\leq&|\mu _{n}-\lambda _{n}|\left(\varepsilon \, ||b^{(n-1)}||
 +\varepsilon \,\prod_{j=1}^{n}(1+2^{-j})\left(\sum_{j=1}^{n-1}\frac{1}{|\mu _{n}-\lambda _{j}|^{2}}\right)^{\frac{1}{2}}
 \right).
\end{eqnarray*}
We have seen in Section 2.3 that properties (4) and (5) at Step $j\leq n-1$ imply that $||b^{(j)}-b^{(j-1)}||\leq 2^{-(j-1)}$ (this is assertion (9')), so that $||b^{(n-1)}||\leq\sum_{j=2}^{n-1}2^{-(j-1)}\leq 1$. Combining this with property (h), we obtain that
$$
|a_{n}\overline{b}_{n}^{(n)}| <|\mu _{n}-\lambda _{n}|(\varepsilon +4^{-(n+1)}).
$$
Since $\varepsilon <4^{-(n+1)}$ by (a), 
$$
|{b}_{n}^{(n)}| <2\,.\,4^{-(n+1)}\frac{|\mu _{n}-\lambda _{n}|}{|a_{n}|}\cdot
$$
As
$a_{n}=2^{-(n+1)}|\mu _{n}-\lambda _{n}|$, this yields that $|b_{n}^{(n)}|<2^{-n}$, and (4) holds true.
\par\smallskip
$\bullet$ property (6) is easy: for $i=1,\ldots, n-1$,
\begin{eqnarray*}
 ||u_{i}^{(n-1)}-u_{i}^{(n)}||&=&
% \left|\left|
%  \sum_{j=1}^{n-1}\frac{a_{j}}{\mu _{i}-\lambda _{j}}e_{j}-\sum_{j=1}^{n}\frac{a_{j}}{\mu _{i}-\lambda _{j}}e_{j}
%   \right|\right|\\
%  &=&
\frac{|a_{n}|}{|\mu _{i}-\lambda _{n}|}=2^{-(n+1)}\frac{|\mu _{n}-\lambda _{n}|}{|\mu _{i}-\lambda _{n}|}<{2^{-n}}
\end{eqnarray*}
by (j). So (6) is true.
\par\smallskip
$\bullet$ in order to prove property (7), we have to estimate
\begin{eqnarray*}
 ||u_{j(n)}^{(n)}-u_{n}^{(n)}||&=&\left|\left|
 \sum_{j=1}^{n}\frac{a_{j}}{\mu _{j(n)}-\lambda _{j}}e_{j}-\sum_{j=1}^{n}\frac{a_{j}}{\mu _{n}-\lambda _{j}}e_{j}
  \right|\right|\\
 &=&\left(\sum_{j=1}^{n-1}|a_{j}|^{2}
 \left|\frac{1}{\mu _{j(n)}-\lambda _{j}}-\frac{1}{\mu _{n}-\lambda _{j}}\right|^{2}\right)^{\frac{1}{2}}
 +|a_n| \left|\frac{1}{\mu_{j(n)}-\lambda_n}-\frac{1}{\mu_n-\lambda_n}\right|\\
&<&\varepsilon +|a_n| \, \frac{|\mu_n-\mu_{j(n)}|}{|\mu_{j(n)}-\lambda_n|\,.\,|\mu_n-\lambda_n|}\cdot
\end{eqnarray*}
by (i).
Now as $a_{n}=2^{-(n+1)}|\mu _{n}-\lambda _{n}|$, 
\begin{eqnarray*}
 |a_n| \frac{|\mu_n-\mu_{j(n)}|}{|\mu_{j(n)}-\lambda_n|\,.\,|\mu_n-\lambda_n|} &\leq&
2^{-(n+1)} \,\frac{|\mu_n-\lambda_n|+|\lambda_n-\mu_{j(n)}|}{|\mu_{j(n)}-\lambda_n|}\\
&\leq&
2^{-(n+1)} \, \left(1+\frac{|\mu_n-\lambda_n|}{|\mu_{j(n)}-\lambda_n|}\right)\\
&<&2^{-(n+1)} \,(1+2^{-n})
\end{eqnarray*}
by (j). It follows then from (a) that
$$||u_{j(n)}^{(n)}-u_{n}^{(n)}||\leq \varepsilon +2^{-(n+1)} \,(1+2^{-n})<2^{-n},$$
 so (7) is true.
\par\smallskip
$\bullet$ lastly, we have to estimate the quantities
$||T_{n}u_{i}^{(k)}-\mu _{i}u_{i}^{(k)}||$ for $k=1,\ldots,n-1$ and $i=1,\ldots,k$: since $T_{k}u_{i}^{(k)}=\mu _{i}u_{i}^{(k)}$, we have 
$$||T_{n}u_{i}^{(k)}-\mu _{i}u_{i}^{(k)}||=||\sum_{p=k+1}^{n}(T_{p}-T_{p-1})u_{i}^{(k)}||\leq
\sum_{p=k+1}^{n} ||(T_{p}-T_{p-1})u_{i}^{(k)}||.$$
Since $u_{i}^{(k)}$ belongs to $\textrm{sp}[e_{1},\ldots,e_{k}]$, we have $T_{p}u_{i}^{(k)}=D_{k}u_{i}^{(k)}+R_{p}u_{i}^{(k)}$ for $p\geq k$, so that 
\begin{eqnarray*}
(T_{p}-T_{p-1})u_{i}^{(k)}&=&(R_{p}-R_{p-1})u_{i}^{(k)}=
\pss{u_{i}^{(k)}}{b^{(p)}-b^{(p-1)}}a^{(p)}
% &=&\left(\sum_{j=1}^{p}a_{j}(\overline{b}_{j}^{(p)}-\overline{b}_{j}^{(p-1)})\right)
% \left(\sum_{j=1}^{p} a_{j}e_{j}\right)
\end{eqnarray*}
for $p\geq k+1$. Thus
\begin{eqnarray*}
 ||(T_{p}-T_{p-1})u_{i}^{(k)}||&\leq&||{b^{(p)}-b^{(p-1)}}||\,.\,|| u_{i}^{(k)}||\,.\,||a^{(p)}||.
% \leq ||{b^{(p)}-b^{(p-1)}}||\,.\,||a||^2.
%  &\leq& \left(\sum_{j=1}^{p} |\overline{b}_{j}^{(p)}-\overline{b}_{j}^{(p-1)}|^{2}   \right)^{\frac{1}{2}}\left(\sum_{j=1}^{p}|a_{j}|^{2}\right)\\ 
%  &\leq& \left(\sum_{j=1}^{p} |\overline{b}_{j}^{(p)}-\overline{b}_{j}^{(p-1)}|^{2}   \right)^{\frac{1}{2}}\left(\sum_{j=1}^{p}4^{-j}\right)\\ 
%   &\leq& 
%   \left(\sum_{j=1}^{p-1} |{b}_{j}^{(p)}-{b}_{j}^{(p-1)}|^{2}  +|b_{p}^{(p)}|^{2} \right)^{\frac{1}{2}}||a||^2 
%  &\leq& 2.2^{-p}=2^{-(p-1)}
\end{eqnarray*}
By the induction assumption and (5) which we have already proved for $p=n$, we know that (9') holds true for any $p\leq n$:
$||b^{(p)}-b^{(p-1)}||\leq 2^{-(p-1)}$ for $k+1\leq p \leq n$. Moreover for $k+1\leq p \leq n$, $||a^{(p)}||\leq 1$ by (3) which is true until step $n$, and so it remains to prove that $||u_i^{(k)}||\leq 3$ for any
$k=1,\ldots,n-1$. By the induction assumption and (6), we have
$||u_i^{(j)}-u_i^{(j-1)}||\leq 2^{-j}$ for $i+1\leq j\leq n-1$. Hence
$$||u_i^{(k)}-u_i^{(i)}||\leq \sum_{j=i+1}^k ||u_i^{(j)}-u_i^{(j-1)}||\leq \sum_{j=i+1}^k 2^{-j}\leq 2^{-i}$$ for any $1\leq k\leq n-1$. So $||u_i^{(k)}||\leq 2^{-i}+||u_i^{(i)}||$. Now for any $i\leq n-1$, we have by (7) of the induction assumption that $||u_{j(i)}^{(i)}-u_{i}^{(i)}||\leq 2^{-i}$ so that $||u_{i}^{(k)}||\leq 2.2^{-i}+||u_{j(i)}^{(i)}||$. Then since $i\leq n-1$ and $j(i)<i$ we can again estimate
$$||u_{j(i)}^{(i)}|| \leq ||u_{j(i)}^{(j(i))}||+2^{-j(i)}<2.2^{-j(i)}+||u_{j(j(i))}^{(j(i))}||.$$ Since $j(m)<m$ for every $m\geq 2$, there exists for each $i\leq n-1$ an integer $s_i$ such that $j^{[s_{i}-1]}(i)>j^{[s_{i}]}(i)$ and $j^{[s_{i}]}(i)=1$, where $j^{[s]}(i)$ denotes for each $s\geq 1$ the $s^{th}$ iterate of the function $j$. Thus 
$$||u_i^{(k)} ||\leq 2(2^{-i}+2^{-j(i)}+2^{-j(j(i))}+\ldots+2^{-j^{[s_{i}-1]}(i)}+2^{-1})+||u_1^{(1)}||\leq 3.$$
So  $||(T_{p}-T_{p-1})u_{i}^{(k)}||\leq 3\,.\, 2^{-(p-1)}$ for any 
$k+1\leq p\leq n$.
This yields that $$||T_{n}u_{i}^{(k)}-\mu _{i}u_{i}^{(k)}||<3\,\sum_{p=k+1}^{n}2^{-(p-1)}\leq 3\,.\,2^{-(k-1)}$$ and this estimate proves (8).

\end{document}